\newtheorem{theorem}{Theorem}[section]
\newtheorem{corollary}[theorem]{Corollary}
\newtheorem{prop}[theorem]{Proposition}
\theoremstyle{definition}
\newtheorem{definition}[theorem]{Definition}
\newtheorem{example}[theorem]{Example}
\theoremstyle{remark}
\newtheorem{remark}[theorem]{Remark}
\newcommand{\diam}{\text{diam}}
\numberwithin{equation}{section}
\keywords{Metric spaces, metric normal structure, orbits, nonexpansive operators,  convexity structure, action of groups,  common fixed points }
\subjclass[2010]{47H09, 47H10, 47H20, 54E40}
\begin{document}

\title[ Fixed points for orbit-nonexpansive mappings]{Fixed points and common fixed points for orbit-nonexpansive mappings in metric spaces}
\author{Rafael Esp\'{\i}nola, Maria Jap\'on, Daniel Souza}

\address{R. Esp\'{\i}nola\hfill\break
Departamento de An\'{a}lisis Matem\'{a}tico, Universidad de
Sevilla, Spain}
 \email{espinola@us.es}

 \address{M. Jap\'on \hfill\break
Departamento de An\'{a}lisis Matem\'{a}tico, Universidad de
Sevilla, Spain}
\email{japon@us.es}
\address{D. Souza \hfill\break
Departamento de An\'{a}lisis Matem\'{a}tico, Universidad de
Sevilla, Spain}
 \email{jdsouzaufrj@gmail.com}

\thanks{The authors are partially supported by Ministerio de Ciencia e Innovaci\'on, Grant  PGC2018-098474-B-C21, and Junta de Andaluc\'{\i}a Grants P20-00637, US-1380969 and
FQM-127.}

\begin{abstract}
In this paper we introduce an interlacing condition on the elements of a family of operators that allows us to gather together a number of results on fixed points and common fixed points for single and families of mappings defined on metric spaces. The innovative concept studied here 
deals with nonexpansivity conditions with respect to orbits  and under assumptions that only depend on the features of the closed balls of the metric space. 
\end{abstract}

\maketitle

\section{Introduction}

Given $(M,d)$ a metric space,  a mapping $T:M\to M$ is said to be nonexpansive if 
$$
d(T x,Ty)\le d(x,y)
$$
for all $x,y\in M$. 
Nonexpansive operators arose as a forthright generalization of contractive mappings introduced by S. Banach. Easy examples show that nonexpansive self-mappings defined on a complete metric space may fail to have a fixed point. However, in the framework of Banach spaces,  W. A. Kirk  proved  the existence of fixed points for every nonexpansive self-mapping defined from a convex weakly compact subset  under the additional assumption of normal structure  \cite{K}. The notion of normal structure was defined by Brodski\u{i} and Milman in \cite{BM}  as a key tool to show the existence  of a common fixed point for the group of (onto) isometries defined on a convex weakly compact subset of a Banach space.
Since then, a  vast literature has appeared connecting  geometry properties of Banach spaces with the   existence of fixed points for nonexpansive mappings as well as with the existence of common fixed points for families of operators satisfying nonexpansivity and  some extra requirement.

As it is the case for contractions and isometries, being nonexpansive for a mapping is a pure metric feature that does not call for the definition of any underlying linear structure: it only depends on the relationship between the distance of points and the distance of their images. Therefore, it is not surprising that many authors have studied some types of extensions when it comes to  the existence of fixed points for nonexpansive operators   on complete metric spaces (see, for instance, \cite{EFP,EJS,EK,Kh0, Kh,KK,La,N} and references therein).


 \medskip

 In this paper, our general framework will be a metric space and our main purpose is the analysis of the existence of  (common) fixed points for ope\-ra\-tors that verify a nonexpansivity condition with respect to  orbits and for which continuity assumptions are not needed. These operators were firstly introduced by A. Nicolae  \cite{N} (see also \cite{AFH})
 and  include nonexpansive mappings as a particular case.  The precise definitions will be given  in the next section. A similar notion requiring continuity was initially considered in \cite{KS}
in relation to mappings with diminishing orbital diameters. 
  More recently, orbit conditions have been considered in the study of existence of best proximity points for cyclic and non cyclic mappings \cite{DEK, GM}.

  With tools  relying only upon  features  of the closed balls of the metric space,
we will be able to extend some previous results in the Banach framework for orbit-nonexpansive mappings and, additionally,  in the  much wider setting of metric spaces.  As a particular case, we will deduce the common fixed point result for bounded  hyperconvex metric spaces given  in \cite{Kh,La} as well as those obtained for CAT(0) metric spaces in \cite{N}. 
New directions beyond the prior scopes will also be provided throughout the article.
  
  \medskip

The organization of the paper is the following: In Section 2 we will establish some basic definitions as well as the metric framework on which we are going to work. We will introduce the notion of orbit-nonexpansive mapping and  we will  provide examples of such mappings emphasizing the lack of  continuity.
 In Section 3 we will focus on the existence of fixed points for orbit-nonexpansive operators. With the purpose of ana\-ly\-zing  simultaneously 
 the case of a single mapping and  the action of a  group, we will introduce an {\it interlaced} condition for a family of operators defined on a metric space, upon which our  main theorems will be proved.   Although they do not generally form a group, following similar techniques as in \cite{Kh},  the existence of a  common fixed point for a family of commuting operators will also be achieved. The concept of {\it metric} normal structure will be essential in our approach. Additionally, 
 we will show how our results  generalize \cite[Theorem 2.2]{AFH}, \cite[Theorem 5.1]{N}, \cite[Theorem 8]{Kh} and some others, as well as they can be applied to other environments which are not covered by the previous cited papers.

 In Section 4, we will focus on the condition of uniform relative normal structure, initially defined by Soardi \cite{S} in the linear case, for obtaining fixed points for nonexpansive operators defined in abstract $M$ Banach lattices, and also used by T. Lau \cite{TLau} for the action of isometries. Once more, our framework will be a general metric space as in \cite{EJS, KK} and the interlaced condition will be used to obtain fixed points for a single mapping and for the action of a group likewise.




\section{Preliminaries} 

Throughout this paper, $(M,d)$ will denote a metric space. Given a mapping $T:M\to M$ and $x\in M$, the orbit of $x$ under $T$ is denoted by 
$$
o_T(x):=\{x\}\cup \{T^n x: n\in\mathbb{N}\}.
$$
Given a bounded subset  $A$ of $M$, we denote by $\delta(A)$ its diameter, that is, $\delta(A):=\sup\{d(x,y):x,y\in A\}$. Additionally, for $x\in M$ we set
$$
D(x,A):=\sup\{d(x,a): a\in A\},
$$
which gives the least radius of a closed ball centered at $x$ containing $A$. (Note that $D(x,A)$ is also denoted by $r_x(A)$ in some other references). The following definition was initially introduced in \cite{N}:

 \begin{definition}\label{def1} Given a metric space $(M,d)$, a mapping $T:M\to M$ is said to be orbit-nonexpansive  (or nonexpansive with respect to orbits) if 
\begin{equation}\label{orbit}
	d(Tx,Ty)\le D(x, o_T(y))\qquad \forall x,y\in M.
\end{equation}
\end{definition}

It is clear that every nonexpansive mapping satisfies (\ref{orbit}) since $y\in o_T(y)$ for all $y\in M$. On the other hand, it is not difficult to check that condition (\ref{orbit}) further implies: $d(T^nx,T^ny)\le D(x, o_T(y))$ for all $x,y\in M$ and $n\in\mathbb{N}$. In the event that $(M,d)$ is an unbounded metric space,  the following disjunctive follows: either all orbits of $T$ are bounded or all orbits  of $T$ are unbounded.  If a mapping $T$ verifies that all its orbits are unbounded, condition (\ref{orbit}) trivially holds and no fixed point can be assured as a simple translation in $\mathbb{R}$ shows. We will exhibit later an example that no fixed point can be guaranteed even if all of T's orbits are bounded in an unbounded metric space
 (see Remark \ref{prus}). Thus, from this moment on we will assume that the metric space $(M,d)$ is bounded. 

\medskip

 A. Nicolae   proved that  every self orbit-nonexpansive operator has a fixed point when it is defined on a bounded complete CAT(0) space (and therefore on a bounded closed and convex subset of a Hilbert space) \cite[Theorem 5.1]{N}. 
In a later paper, A. Amini-Harandi, M. Fakhar and H. R. Hajisharifi \cite{AFH} studied the existence of fixed points for orbit-nonexpansive mappings  in the Banach space setting under weak normal structure. In fact, 
 \cite[Theorem 2.2]{AFH}  proves that a Banach space $X$ has weak normal structure if and only if for every convex weakly compact subset $C$ and for every $T:C\to C$ orbit-nonexpansive mapping there exists a fixed point. It should be highlighted  that weak normal structure in Banach spaces had previously been characterized by using fixed points for the so-called Jaggi nonexpansive mappings (see \cite{J,K}). Nevertheless the definition of this class of  mappings introduced by Jaggi   in \cite{J}   strongly uses the linear structure of the Banach space.



\medskip

It is clear that nonexpansivity implies uniform continuity. However, being orbit-nonexpanse does not require continuity as the  example given in  \cite[Example 5.2]{N} shows.
We next display  a further example of an orbit-nonexpansive mapping  for which it is not possible to find a non-trivial closed $T$-invariant interval  upon which the mapping  $T$  is continuous.

\begin{example}\label{e2} Let $T:[-1,1]\to [-1,1]$ given by
$$
T(x)=\left\{
\begin{array}{rll}
{x\over 3}& \mbox{if} & \mbox{$x$ is irrational},\\
-{x\over 3} & \mbox{if} & \mbox{$x$ is rational}.\\
\end{array}\right.
$$
We prove that $T$ is orbit-nonexpansive. Notice that $0\in \overline{o_T(x)}$ for all $x\in [-1,1]$. Take first $x,y\ge 0$ such that $x$ is irrational and $y$ rational. We split the proof into cases: 

Case 1: $0\le x<y\le1$. In this case $\displaystyle D(x,o_T(y))=\max\{y-x,x+\frac{y}{3}\}$ and $\displaystyle D(y,o_T(x))=y$. Moreover,
$$
d(Tx,Ty)={x\over 3}+{y\over 3}\le x+{y\over 3}\le D(x,o_T(y)) \ \text{and}
$$

$$
d(Ty,Tx)={x\over 3}+{y\over 3}\le 2{y\over 3}\le D(y,o_T(x)).
$$


Case 2: $0\le y<x\le1$. In this case $\displaystyle D(x,o_T(y))=x+\frac{y}{3}$ and $\displaystyle D(y,o_T(x))=\max\{y,x-y\}$. Moreover,
$$
d(Tx,Ty)={x\over 3}+{y\over 3}\le {2x\over 3}\le x\le D(x,o_T(y)).
$$
Now, if $\displaystyle y\le {x\over 2}$,
$$
d(Ty,Tx)={x\over 3}+{y\over 3}\le x-y\le D(y,o_T(x)).
$$
and if $\displaystyle {x\over 2}\le y$,
$$
d(Ty,Tx)={x\over 3}+{y\over 3}\le {2y\over 3}+{y\over 3}= y\le D(y,o_T(x)).
$$

By symmetry, we can conclude that $d(Tx,Ty)\le D(x,o_T(y))$ for all $x,y\in [-1,1]$ and $T$ is orbit-nonexpansive. Additionally,  $T$ fails to be nonexpansive on any $T$-invariant nontrivial closed interval of $[-1,1]$, what easily follows from the fact that $T$ is discontinuous everywhere except for the origin, which is fixed for $T$.


\end{example}

\begin{remark}
	Notice that every orbit-nonexpansive mapping is continuous at least at  the set of fixed points (provided it is not empty). This is due to the inequality
	 $d(Tx,y)\le d(x,y)$ for all $x\in M$ and for  $y\in M$ verifying  $T(y)=y$. 
	
\end{remark}


Mappings satisfying $d(Tx,Ty)\le \max\{d(x,y), d(x, Ty)\}$ for all $x,y\in M$  clearly lie  within the scope of Definition \ref{def1}.
Another family of mappings that fulfills the  orbit-nonexpansivity  is the class of mean nonexpansive mappings introduced in \cite{Z} as follows: 
A mapping $T:M\to M$ is said to be mean nonexpansive if there exist nonnegative constants $a,b\ge 0$ with $a+b\le 1$ and such that 
$$
d(Tx,Ty)\le ad(x,y)+ bd(x, Ty)
$$
for all $x,y\in M$ (see \cite{WZ,Z,Zu} and references therein).

\medskip

Since we are also interested in obtaining common fixed points for the action of a group, we  recall some standard notions: 
A set $\mathcal S$ is a group if there is an inner product, $\cdot$, defined on it such that it is associative,  there is an element $1\in \mathcal S$ with $s\cdot 1=1\cdot s=s$ for all $s\in \mathcal S$, and for every $s\in \mathcal S$ there is $s^{-1}\in \mathcal S$ with $s\cdot s^{-1}=s^{-1}\cdot s=1$. 
Given a set $M$ and a group $\mathcal{S}$, it is said that $\mathcal{S}$ acts over $M$ if every $s\in \mathcal{S}$ defines an operator from $M$ into itself and the following two properties are satisfied: the unit element $1\in \mathcal{S}$ defines the identity over $M$  and $(s\cdot t)(x)=s(tx)$ for all $s,t\in\mathcal{S}$ and $x\in M$.






 \medskip
 
Next, we recall some  facts about convexity structures in metric spaces. 

\begin{definition}

\begin{itemize}

\item[i)]
 A subset of a metric space $(M,d)$ is said to be {\it admissible} if it is an intersection of closed balls of $M$. The family of all admissible sets of $M$ is denoted by $\mathcal{A}(M)$.

\item[ii)] A family ${\mathcal F}$ of subsets of $M$ defines a {\it convexity structure} on $M$ if it contains all closed balls of $M$ and is closed under intersection.

\item[iii)] A convexity structure on $M$ is said to be {\it compact} if the intersection of any collection of elements of it is nonempty provided finite intersections of such elements are nonempty.

\end{itemize}
\end{definition}

In fact, it is known that compactness of the family of admissible sets implies completeness of the metric space \cite[Proposition 3.2]{EK}. 
It follows from the definition that the family $\mathcal{A}(M)$ of admissible sets  is  a convexity structure  contained in every other  convexity structure considered on $M$.
 
 In the setting  of  a Banach space endowed with a topology $\tau$ for which the closed balls are $\tau$-closed, the family of all convex $\tau$-closed bounded sets   forms a convexity structure.  Standard examples are Banach spaces endowed with the weak topology, 
  the weak$^*$-topology (in case of a dual Banach space) or the closed in measure topology in case of the Lebesgue space $L_1[0,1]$. In case that a  $\tau$-compact  convex subset $M$ is considered for any of the previous topologies, the resulting convexity structure formed by the convex $\tau$-closed subsets of $M$ is also compact.  These
 convexity structures turned out to  be essential tools in order to ensure the existence of fixed points for nonexpansive operators defined on convex subsets which are weakly compact, weakly$^*$  compact or compact in measure, respectively (see, for instance, \cite{BDJ, DJ, DGJ} and references therein). 


\medskip

Abstract convexity structures in metric spaces have been extensively stu\-died in the last decades. A complete monograph on related topics was given by M. A. Khamsi and W. A. Kirk in \cite{KK}, where Chapter 5 is   devoted to normal structure in the metric space setting.

\begin{definition}\label{NS} A metric space $(M,d)$ is said to have
 {\it metric} normal structure if for every non-singleton admissible set $A\in\mathcal{A}(M)$, there exists some $z_A\in A$ such that $D(z_A,A)<\delta(A)$.\end{definition} 
 
Note that the previous definition only involves the closed balls of the me\-tric space $(M,d)$. It says that for every admissible set $A$ with positive diameter, there exists a  closed ball containing $A$, with radius strictly less than the diameter of $A$ and whose center  $z_A$ is  in $A$ (in the last section we will consider a modified version of the metric normal structure where the center $z_A$ of the ball   containing $A$ is allowed to lie outside of $A$).

\medskip

When
the family of admissible sets $\mathcal{A}(M)$ is replaced in Definition \ref{NS} by some convexity structure as the ones introduced above for Banach spaces, 
the concepts of weak normal structure, weak$^*$ normal structure or $\tau$-normal structure (when $\tau$ is the topology of the convergence in measure in $L_1[0,1]$) are just displayed. In fact, it was proved in \cite{Le} that convex compact in measure subsets of $L_1[0,1]$ have $\tau$-normal structure.  
 Actually,  it is possible to find closed convex bounded subsets of a Banach space failing to have   weak normal structure (when the convexity structure is the family of all convex weakly compact subsets) but they still  have metric normal structure   \cite{EJS,lennard}. 

\medskip

Strongly connected with Definition \ref{NS} is the notion of {\it uniform normal structure} (UNS).

\begin{definition}
 A metric space $(M,d)$ is said to have UNS if there exists some $c\in (0,1)$ such that for every non-singleton admissible set $A$ there exists some $x_A\in A$ with $D(x_A,A)<c\ \delta(A)$. 
 
 \end{definition}
 
 \begin{remark}
It is known that, in complete metric spaces,  uniform normal structure implies compactness of  the family of admissible sets $\mathcal{A}(M)$ (see \cite{Kh0} or \cite[Theorem 5.4]{KK}).
\end{remark}

\medskip

We also recall the definition of a one-local retract of a metric space, which will be  useful  in the next section to extend a fixed point result obtained for a single mapping to a common fixed point result for a family of commuting operators. More details about one-local retracts can be found in \cite{Kh}, where it is shown  that they can be considered as a generalization of nonexpansive retracts, enjoying some structural properties of great interest.

\begin{definition}\label{1local}
	A subset $D$ of a metric space $(M,d)$ is a one-local retract of $M$ if, for any family of closed balls centered at $D$ with nonempty intersection, it is the case that the intersection of all of them must have nonempty intersection with $D$.
\end{definition}


\medskip

We finish this section by
stating some fixed point theorems that will be extended in what follows: 
 
\begin{theorem}\label{1}\cite[Theorem 8]{Kh}
Let $(M,d)$ be a bounded metric space with metric normal structure and such that $\mathcal{A}(M)$ is compact. Then any commuting family of nonexpansive self-mappings on $M$ 
has a common fixed point. Moreover, the set of common fixed points is a one-local retract of $M$.
\end{theorem}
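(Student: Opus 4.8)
The plan is to reduce everything to a single-mapping statement and then bootstrap to families through the theory of one-local retracts.

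\textbf{Single mapping.} First I would fix one nonexpansive $T\colon M\to M$ and produce a fixed point. Let $\Sigma$ be the family of nonempty admissible sets $A$ with $T(A)\subseteq A$; it contains $M$, and since $\mathcal{A}(M)$ is compact every descending chain in $\Sigma$ has nonempty (admissible, $T$-invariant) intersection, so Zorn's lemma yields a minimal element $K\in\Sigma$. I claim $K$ is a singleton. Set $r=\inf_{x\in K}D(x,K)$ and let $C=\{x\in K: D(x,K)\le r\}$; compactness of $\mathcal{A}(M)$ (applied to the balls $\overline{B}(y,r+\varepsilon)$, $y\in K$, intersected with $K$) shows $C$ is a nonempty admissible set. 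The key point is that $C$ is again $T$-invariant: for $x\in C$ one has $T(K)\subseteq \overline{B}(Tx,r)$ because $d(Tx,Tz)\le d(x,z)\le r$ for all $z\in K$, so $K\cap\overline{B}(Tx,r)$ is a nonempty admissible $T$-invariant subset of $K$ and hence equals $K$ by minimality; thus $D(Tx,K)\le r$ and $Tx\in C$. Minimality forces $C=K$, so $D(x,K)=r$ for every $x\in K$ and therefore $\delta(K)=r$. If $\delta(K)>0$, metric normal structure provides $z\in K$ with $D(z,K)<\delta(K)=r$, contradicting $r=\inf_{x}D(x,K)$. Hence $\delta(K)=0$ and the unique point of $K$ is a fixed point.

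\textbf{The fixed point set is a one-local retract, and one-local retracts inherit the hypotheses.} Next I would show $\mathrm{Fix}(T)$ is a one-local retract. Given closed balls centered at points $x_i\in\mathrm{Fix}(T)$ with $A:=\bigcap_i\overline{B}(x_i,r_i)\ne\emptyset$, nonexpansivity gives $d(Tz,x_i)=d(Tz,Tx_i)\le d(z,x_i)\le r_i$ for $z\in A$, so $T(A)\subseteq A$; since $A$ is admissible, every admissible subset of $M$ lying in $A$ witnesses that $A$ (with the induced metric) again has metric normal structure and compact admissible cover, so the single-mapping step applied to $T|_A$ gives a fixed point in $A$, i.e.\ $A\cap\mathrm{Fix}(T)\ne\emptyset$. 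I would then record two structural facts about a one-local retract $D$ of $M$, both consequences of Definition \ref{1local}: (a) $D$ inherits metric normal structure and compactness of $\mathcal{A}(D)$, and (b) one-local retracts are transitive. For (a) the nontrivial point is relocating the normal-structure center into $D$: given a non-singleton $B=D\cap C\in\mathcal{A}(D)$ with $C$ an intersection of balls centered in $D$, the admissible hull $\mathrm{cov}(B)$ in $M$ satisfies $\mathrm{cov}(B)\subseteq C$ and $\delta(\mathrm{cov}(B))=\delta(B)$, so normal structure of $M$ gives $z\in\mathrm{cov}(B)$ with $\rho:=D(z,B)<\delta(B)$; applying the one-local retract property to the balls defining $C$ together with $\{\overline{B}(b,\rho):b\in B\}$ (all centered in $D$, and all containing $z$) produces $z'\in D$ in their intersection, whence $z'\in C\cap D=B$ and $D(z',B)\le\rho<\delta(B)$. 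Compactness of $\mathcal{A}(D)$ and fact (b) follow similarly by feeding families of balls centered in $D$ into the one-local retract property.

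\textbf{Finite commuting families.} With these tools I would run an induction. Suppose $D=\bigcap_{i<n}\mathrm{Fix}(T_i)$ is a nonempty one-local retract. Because $T_n$ commutes with each $T_i$, it maps $D$ into $D$, and by fact (a) the single-mapping and fixed-point-set steps apply inside $D$: they give that $D\cap\mathrm{Fix}(T_n)=\bigcap_{i\le n}\mathrm{Fix}(T_i)$ is a nonempty one-local retract of $D$, hence of $M$ by transitivity. Thus every finite subfamily of the commuting family has a nonempty common fixed point set which is a one-local retract.

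\textbf{Arbitrary commuting families, and the main obstacle.} For an arbitrary family I would well-order it and build a descending transfinite chain of one-local retracts $D_\xi=\bigcap_{\eta<\xi}\mathrm{Fix}(T_\eta)$, using the successor step above and taking intersections at limit ordinals; the last term is the desired common fixed point set, and transitivity makes it a one-local retract of $M$. The main obstacle is exactly the limit stage: one must know that a descending chain of nonempty one-local retracts has nonempty intersection (again a one-local retract) under compactness of $\mathcal{A}(M)$. This does not follow from compactness of admissible sets alone, since the trace of an admissible set of $M$ on a one-local retract need not be admissible there; it is the genuine structural input about one-local retracts from \cite{Kh}, and it is what makes the passage from finite to infinite families work.
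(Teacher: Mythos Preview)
Your proposal is correct and mirrors how the paper (via its generalization, Corollary~\ref{normal2}) handles this result: prove the single-mapping case together with the one-local retract property of the fixed point set, then bootstrap to commuting families through the structural theory of one-local retracts, explicitly deferring the limit-stage input to \cite{Kh} (the paper's own proof of Corollary~\ref{normal2} reads in full ``follows in the same way as those of Theorems~7 and~8 in \cite{Kh}''). The only cosmetic difference is in the single-mapping step: you use a Chebyshev-center argument ($r=\inf_{x\in K}D(x,K)$ and show the set where this infimum is attained is $T$-invariant), while the paper argues via ${\rm cov}(T(A_0))=A_0$ and then shrinks to $A_0\cap\bigcap_{a\in A_0}B(a,r)$ with $r=D(a_0,A_0)$ coming directly from normal structure; both are standard and equivalent here.
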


 \begin{theorem}\label{2} \cite[Theorem 2.2]{ AFH} Let $M$ be a convex weakly compact subset of a Banach space $X$ with weak normal structure and $T:M\to M$ an orbit-nonexpansive mapping. Then $T$ has a fixed point. 
 \end{theorem}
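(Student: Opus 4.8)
The plan is to adapt Kirk's classical minimal-invariant-set argument to the orbit-nonexpansive setting. First I would apply Zorn's lemma to the family of nonempty convex weakly compact $T$-invariant subsets of $M$, ordered by reverse inclusion. This family is nonempty ($M$ itself belongs to it), and the intersection of any decreasing chain is again nonempty by weak compactness, as well as convex, weakly compact and $T$-invariant; hence a minimal such set $K$ exists. The goal is then to show that $K$ is a singleton, which immediately yields a fixed point since $T(K)\subseteq K$.

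Assume for contradiction that $\delta(K)>0$. Two preliminary reductions are needed. The first is the identity $K=\overline{\mathrm{conv}}\,T(K)$: the set $\overline{\mathrm{conv}}\,T(K)$ is convex, weakly compact and contained in $K$, and since $\overline{\mathrm{conv}}\,T(K)\subseteq K$ gives $T(\overline{\mathrm{conv}}\,T(K))\subseteq T(K)\subseteq\overline{\mathrm{conv}}\,T(K)$, it is $T$-invariant, so minimality forces equality. The second is the Chebyshev-center construction: set $r:=\inf_{z\in K}D(z,K)$ and $C:=\{z\in K: D(z,K)=r\}$. The function $z\mapsto D(z,K)$ is convex and weakly lower semicontinuous, so it attains its infimum on the weakly compact set $K$; thus $C$ is nonempty, convex and weakly compact. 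Weak normal structure provides some $z_0\in K$ with $D(z_0,K)<\delta(K)$, whence $r<\delta(K)$; since $K$ contains points at mutual distance arbitrarily close to $\delta(K)>r$, such a point cannot belong to $C$, and therefore $C\subsetneq K$.

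The heart of the argument is to prove that $C$ is $T$-invariant, which will contradict minimality. Fix $z\in C$. Because $w\mapsto\|Tz-w\|$ is convex and norm-continuous, its supremum over $\overline{\mathrm{conv}}\,T(K)=K$ coincides with its supremum over $T(K)$, so that $D(Tz,K)=\sup_{y\in K}\|Tz-Ty\|$. Applying orbit-nonexpansivity termwise gives $\|Tz-Ty\|\le D(z,o_T(y))$ for each $y\in K$, and since $K$ is $T$-invariant we have $o_T(y)\subseteq K$, hence $D(z,o_T(y))\le D(z,K)=r$. Taking the supremum over $y$ yields $D(Tz,K)\le r$, and as $r$ is the infimal value this forces $D(Tz,K)=r$, i.e. $Tz\in C$. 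Thus $C$ is a proper nonempty convex weakly compact $T$-invariant subset of $K$, contradicting minimality. Consequently $\delta(K)=0$, so $K$ is a singleton whose unique point is the sought fixed point.

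The step I expect to be the main obstacle, and the only place where the orbit structure genuinely intervenes, is the control of $D(Tz,K)$. For a truly nonexpansive map one bounds $\|Tz-Ty\|\le\|z-y\|\le r$ directly; here the defining inequality replaces $\|z-y\|$ by the orbital radius $D(z,o_T(y))$, and it is precisely the reduction $K=\overline{\mathrm{conv}}\,T(K)$ together with the containment $o_T(y)\subseteq K$ that lets this orbital quantity be reabsorbed into $r$. Care is also required to verify that $C$ is genuinely weakly compact and a proper subset, but these follow respectively from the weak lower semicontinuity of $D(\cdot,K)$ and from weak normal structure.
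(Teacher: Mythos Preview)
Your argument is correct and is essentially the Banach-space specialization of the paper's proof of Theorem~\ref{normal} (applied with $\mathcal F=\{T\}$): the paper runs the same minimal-invariant-set argument but with admissible sets and the ball-hull $\mathrm{cov}(\cdot)$ in place of convex weakly compact sets and $\overline{\mathrm{conv}}(\cdot)$, and with $\tilde A_0=A_0\cap\bigcap_{a\in A_0}B(a,r)$ playing the role of your Chebyshev-center set $C$. The key step---reducing $D(Tz,K)$ to a supremum over $T(K)$ via $K=\overline{\mathrm{conv}}\,T(K)$ and then absorbing $D(z,o_T(y))$ back into $D(z,K)$ because $o_T(y)\subseteq K$---is exactly the mechanism the paper uses (equation~\eqref{A0equal} together with $o_T(a)\subseteq A_0$).
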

 
 \begin{theorem}\label{3}\cite[Theorem 5.1]{N} Let $(M,d)$ be a bounded complete CAT(0) metric space and $T:M\to M$ an orbit-nonexpansive mapping. Then $T$ has a fixed point. 
 
 \end{theorem}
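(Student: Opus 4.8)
The plan is to avoid minimal invariant sets altogether and exploit the single most useful feature of complete CAT(0) spaces: every bounded sequence has a \emph{unique} asymptotic center. Fix any $x_0\in M$ and consider the Picard orbit $u_n:=T^n x_0$, which is bounded since $M$ is bounded. Because $M$ is complete and CAT(0), the function $x\mapsto \limsup_{n}d(x,u_n)$ attains its infimum $r:=\inf_{x\in M}\limsup_{n} d(x,u_n)$ at exactly one point $c\in M$, the asymptotic center of $(u_n)$. I would take this $c$ as the candidate fixed point.

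The key computation applies orbit-nonexpansivity with base point $c$ and argument $u_n$. Since $o_T(u_n)=\{u_k : k\ge n\}$, inequality (\ref{orbit}) gives $d(Tc,u_{n+1})=d(Tc,Tu_n)\le D(c,o_T(u_n))=\sup_{k\ge n}d(c,u_k)$. The right-hand side is nonincreasing in $n$ and converges to $\limsup_{k} d(c,u_k)=r$, so taking $\limsup_{n}$ on the left (and reindexing $u_{n+1}\to u_n$, which leaves the $\limsup$ unchanged) yields $\limsup_{n} d(Tc,u_n)\le r$. Thus $Tc$ is also a minimizer of $x\mapsto\limsup_{n}d(x,u_n)$, and by uniqueness of the asymptotic center we conclude $Tc=c$, the desired fixed point.

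The point that deserves the most care, and the true engine of the argument, is step (i): invoking correctly the structural fact that asymptotic centers of bounded sequences in complete CAT(0) spaces are singletons. This is exactly the ingredient that replaces any convexity manipulation and, crucially, dispenses with continuity of $T$, so the discontinuous examples such as Example \ref{e2} are handled without change. The only other thing to verify is the elementary bookkeeping that the orbit of the shifted point $u_n$ is precisely the tail $\{u_k:k\ge n\}$, which is what makes $D(c,o_T(u_n))$ a tail supremum converging down to $r$. No normal structure is needed for this route; to match the paper's unified framework one could instead pass to a minimal $T$-invariant admissible set (using that bounded complete CAT(0) spaces have metric normal structure and compact $\aA(M)$) and rerun the orbit estimate to force it to be a singleton, but there the delicate issue is the invariance of the Chebyshev-center set under a \emph{discontinuous} $T$ — an obstacle that the asymptotic-center proof sidesteps entirely.
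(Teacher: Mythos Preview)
Your argument is correct. The uniqueness of asymptotic centers of bounded sequences in complete CAT(0) spaces is exactly the lever you need, and the tail computation $D(c,o_T(u_n))=\sup_{k\ge n}d(c,u_k)\downarrow r$ is clean and valid; the reindexing $u_{n+1}\mapsto u_n$ under the $\limsup$ is harmless.

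This route, however, is genuinely different from the paper's. The paper does not prove Theorem~\ref{3} directly; it derives it from Corollary~\ref{coro:normal} (itself a consequence of Theorem~\ref{normal}) by observing that bounded complete CAT(0) spaces have uniform normal structure, hence metric normal structure and a compact family $\mathcal{A}(M)$. That proof goes through a Zorn-minimal $T$-invariant admissible set $A_0$ and the Chebyshev-center set $\tilde A_0=A_0\cap\bigcap_{a\in A_0}B(a,r)$, showing $\tilde A_0$ is $T$-invariant and strictly smaller, forcing $A_0$ to be a singleton. What your approach buys is a short, self-contained proof tailored to the CAT(0) setting that avoids Zorn's lemma and any convexity-structure machinery. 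What the paper's approach buys is uniformity: the same argument works for any bounded metric space with metric normal structure and compact $\mathcal{A}(M)$, and it simultaneously handles families of interlaced mappings, commuting families, and group actions.

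One correction to your closing remark: the ``delicate issue'' you flag --- invariance of the Chebyshev-center set $\tilde A_0$ under a possibly discontinuous $T$ --- is not an obstacle in the paper's proof. The invariance is obtained purely from the orbit inequality: for $x\in\tilde A_0$ and $a\in A_0$ one has $D(x,o_T(a))\le r$ (since $A_0$ is $T$-invariant), whence $d(Tx,Ta)\le r$, so $T(A_0)\subset B(Tx,r)$ and therefore $A_0=\mathrm{cov}(T(A_0))\subset B(Tx,r)$, giving $Tx\in\tilde A_0$. No continuity is used. So the minimal-invariant-set route is not more fragile here; it is simply more general, while yours is more direct for the specific CAT(0) case.
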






\section{Metric normal structure and fixed points for orbit-nonexpansive mappings}

In this section we will study how the notion of metric normal structure introduced in Definition \ref{NS} leads us to achieve positive results concerning the existence of fixed points for orbit-nonexpansive mappings. 
With the purpose of dealing simultaneously  with the case of a single mapping and the case of the action by a group, we introduce the following definition:

\begin{definition}\label{interlaced}
	Let $(M,d)$ be a  bounded metric space and ${\mathcal F}$ a family of self-mappings on $M$ such that
	$$
	d(Tx,Sy)\le \sup \{ D(x,o_R(y))\colon R\in\mathcal F\} \ 
	$$
	for $T,S\in \mathcal F$ and $x,y\in M$.
	Then we say that $\mathcal F$ is a family of interlaced orbit-nonexpansive mappings. \end{definition}

It is straightforward that  a mapping $T:M\to M$ is orbit-nonexpansive  if and only if the singleton family $\mathcal{F}:=\{T\}$ satisfies Definition \ref{interlaced}. Next we prove   a similar result for a group of operators acting over a metric space each of which is orbit-nonexpansive.



\begin{prop}\label{prop:groups}
	Let $(M,d)$ be a bounded metric space and let $\mathcal S$ be a group of self-operators acting on $M$, each of one is  orbit-nonexpansive. Then  the family $\mathcal S$ is  interlaced orbit-nonexpansive.
\end{prop}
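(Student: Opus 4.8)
The plan is to verify the single inequality of Definition \ref{interlaced} directly, exploiting the group structure to rewrite one of the two images as a value of $T$ on a suitable point, and then to identify every point of the resulting orbit with a point lying in the orbit of some member of $\mathcal{S}$. The group hypothesis is what makes this possible, since it guarantees that $T^{-1}$ and all powers $T^k$ again belong to $\mathcal{S}$ and hence are themselves orbit-nonexpansive.

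Fix $T,S\in\mathcal{S}$ and $x,y\in M$. Since $\mathcal{S}$ is a group acting on $M$, the element $W:=T^{-1}S$ belongs to $\mathcal{S}$, and the action identities give $T(Wy)=(T\cdot T^{-1}\cdot S)(y)=Sy$. First I would apply the orbit-nonexpansivity of $T$ (valid because $T\in\mathcal{S}$) to the pair of points $x$ and $Wy$, obtaining
$$
d(Tx,Sy)=d\big(Tx,T(Wy)\big)\le D\big(x,o_T(Wy)\big).
$$
It therefore suffices to bound $D\big(x,o_T(Wy)\big)$ by $\sup\{D(x,o_R(y))\colon R\in\mathcal{S}\}$.

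The key step is the analysis of the orbit $o_T(Wy)$. Using the action identities together with $T^n\cdot T^{-1}=T^{n-1}$, one computes $T^n(Wy)=T^{n-1}(Sy)$ for $n\ge 1$, so that
$$
o_T(Wy)=\{\,T^{k}Sy\colon k\ge -1\,\}.
$$
For each such index the composition $R_k:=T^{k}S$ again belongs to the group $\mathcal{S}$, and $T^{k}Sy=R_k y$ is the first iterate of $y$ under $R_k$, hence lies in $o_{R_k}(y)$. Consequently
$$
d(x,T^{k}Sy)=d(x,R_k y)\le D\big(x,o_{R_k}(y)\big)\le \sup_{R\in\mathcal{S}}D\big(x,o_R(y)\big),
$$
and taking the supremum over $k\ge -1$ yields $D\big(x,o_T(Wy)\big)\le\sup_{R\in\mathcal{S}}D(x,o_R(y))$. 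Combined with the previous display this is precisely the interlaced estimate, so $\mathcal{S}$ is interlaced orbit-nonexpansive.

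I expect the only delicate point to be the bookkeeping in the orbit computation: one must use the group action axioms carefully to see that repeatedly applying $T$ to $Wy=T^{-1}Sy$ sweeps out exactly the points $T^{k}Sy$ with $k\ge -1$ (in particular, the base point corresponding to $k=-1$ must not be dropped), and that each factor $T^{k}S$ is a genuine element of $\mathcal{S}$ whose orbit of $y$ contains $T^{k}Sy$. Boundedness of $M$ plays no structural role here beyond ensuring that every quantity $D(x,o_R(y))$ is finite, which the standing assumptions of the preliminaries already provide.
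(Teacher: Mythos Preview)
Your proof is correct and follows essentially the same route as the paper: write $Sy=T(T^{-1}Sy)$, apply orbit-nonexpansivity of $T$, and then bound $D(x,o_T(T^{-1}Sy))$ by the supremum over $\mathcal{S}$. You spell out the orbit computation and the inclusion $T^kSy\in o_{T^kS}(y)$ more carefully than the paper does (which simply asserts the final inequality from $t^{-1}s\in\mathcal{S}$), but the argument is the same.
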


\begin{proof}
	Let $s,t$ in $\mathcal S$ be orbit-nonexpansive mappings. Then,
	$$
		d(tx,sy) = d(tx,tt^{-1}sy)\le D(x,o_t(t^{-1}sy)).
	$$
	But, $t^{-1}s\in \mathcal S$, so
	$$
	 D(x,o_t(t^{-1}sy))\le \sup \{ D(x,o_r(y))\colon r\in\mathcal S\}.
	$$
	Therefore, $\mathcal S$ is a family of interlaced orbit-nonexpansive operators. \end{proof}

  Now we present the main result of this section.



\begin{theorem}\label{normal} Let $(M,d)$ be a bounded metric space with metric normal structure and such that $\mathcal{A}(M)$ is compact. Let $\mathcal F$ be a family of interlaced orbit-nonexpansive self-mappings on $M$. Then, there exists a common fixed point to all mappings in $\mathcal F$. Moreover, the common fixed point set of $\mathcal{F}$, ${\rm Fix}(\mathcal F)$, is a one-local retract of $M$. 
	
\end{theorem}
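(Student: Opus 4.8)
The plan is to run a Kirk-type minimal-invariant-set argument adapted to the orbit-nonexpansive setting, followed by a bootstrapping argument for the one-local retract claim. First I would observe that since $M$ is bounded it is itself admissible: fixing any $x_0$, one has $M=B(x_0,\delta(M))$, so $M\in\mathcal{A}(M)$ and trivially $T(M)\subseteq M$ for every $T\in\mathcal F$. Hence the family $\Sigma$ of all nonempty admissible sets $A$ with $T(A)\subseteq A$ for all $T\in\mathcal F$ is nonempty. Ordering $\Sigma$ by reverse inclusion and using that $\mathcal{A}(M)$ is compact — so that a descending chain of nonempty admissible sets has nonempty admissible intersection, which is again $\mathcal F$-invariant because $T(\bigcap_i A_i)\subseteq\bigcap_i T(A_i)\subseteq\bigcap_i A_i$ — Zorn's lemma yields a minimal element $K\in\Sigma$.

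The heart of the proof is to show $K$ is a singleton; suppose not, so $\delta(K)>0$. Set $r:=\inf_{x\in K}D(x,K)$ and $C:=\{x\in K: D(x,K)\le r\}$. Compactness of $\mathcal{A}(M)$ guarantees that $C$ is a nonempty admissible set, and metric normal structure provides $z\in K$ with $D(z,K)<\delta(K)$, whence $r<\delta(K)$ and therefore $C\subsetneq K$ (otherwise $\delta(K)=\sup_{x\in K}D(x,K)\le r<\delta(K)$). The crux, and the step I expect to be the main obstacle, is proving that $C$ is $\mathcal F$-invariant, which will contradict minimality. Here the interlaced hypothesis does the work: given $x\in C$, $T,S\in\mathcal F$ and $y\in K$, the orbit $o_R(y)$ lies in $K$ for every $R\in\mathcal F$ (since $y\in K$ and $K$ is invariant), so
\[ d(Tx,Sy)\le \sup_{R\in\mathcal F} D(x,o_R(y))\le D(x,K)=r. \]
Taking the supremum over $y\in K$ and $S\in\mathcal F$ shows $D\big(Tx,\,\mathrm{cov}(\bigcup_{S\in\mathcal F}S(K))\big)\le r$, where $\mathrm{cov}(\cdot)$ denotes the smallest admissible set containing a given set, and I use the elementary identity $D(w,B)=D(w,\mathrm{cov}(B))$.

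It then remains to identify $K_1:=\mathrm{cov}(\bigcup_{S\in\mathcal F}S(K))$ with $K$: since each $S(K)\subseteq K$ we have $K_1\subseteq K$, while for any $T\in\mathcal F$ one gets $T(K_1)\subseteq T(K)\subseteq\bigcup_S S(K)\subseteq K_1$, so $K_1\in\Sigma$ and minimality forces $K_1=K$. Consequently $D(Tx,K)=D(Tx,K_1)\le r$, and as $Tx\in T(K)\subseteq K$ this gives $Tx\in C$. Thus $T(C)\subseteq C$ for every $T\in\mathcal F$, with $C$ a nonempty admissible set strictly contained in $K$, contradicting minimality; hence $\delta(K)=0$ and the unique point of $K$ is a common fixed point of $\mathcal F$.

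Finally, for the one-local retract assertion I would take any family of closed balls $\{B(x_\alpha,r_\alpha)\}$ with centers $x_\alpha\in\mathrm{Fix}(\mathcal F)$ and nonempty intersection $A:=\bigcap_\alpha B(x_\alpha,r_\alpha)$, and produce a common fixed point inside $A$. Because each $x_\alpha$ is fixed, $o_R(x_\alpha)=\{x_\alpha\}$, so for $z\in A$ and $T\in\mathcal F$ the interlaced inequality yields $d(Tz,x_\alpha)=d(Tz,Tx_\alpha)\le\sup_R D(z,o_R(x_\alpha))=d(z,x_\alpha)\le r_\alpha$; hence $T(A)\subseteq A$. The admissible set $A$ inherits metric normal structure and compactness of its admissible subsets from $M$, and $\mathcal F|_A$ is again interlaced orbit-nonexpansive, so the existence part proved above, applied within $A$, furnishes a common fixed point of $\mathcal F$ lying in $A$. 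This shows $A\cap\mathrm{Fix}(\mathcal F)\neq\emptyset$, i.e.\ $\mathrm{Fix}(\mathcal F)$ is a one-local retract of $M$.
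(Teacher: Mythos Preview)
Your proof is correct and follows essentially the same Kirk-type minimal-invariant-set argument as the paper: obtain a minimal admissible $\mathcal{F}$-invariant set $K$, use the identity $K=\mathrm{cov}\bigl(\bigcup_{S\in\mathcal F}S(K)\bigr)$ together with the interlaced condition to show that the set of points of $K$ realizing small radius is again $\mathcal{F}$-invariant, contradict minimality via normal structure, and then bootstrap inside any intersection of balls centered at common fixed points. The only cosmetic difference is that you work with the Chebyshev radius $r=\inf_{x\in K}D(x,K)$ (appealing to compactness of $\mathcal{A}(M)$ for the nonemptiness of $C$), whereas the paper simply takes $r=D(a_0,A_0)$ for the single point $a_0$ furnished by normal structure.
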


\begin{proof}
	Consider ${\mathcal A}_{\mathcal F}(M)$ the class of nonempty admissible subsets of $M$ which are $T$-invariant for every $T\in \mathcal F$. This family is nonempty since $M\in {\mathcal A}_{\mathcal F}(M)$. Zorn's Lemma and compactness of ${\mathcal A}(M)$ imply that there exists a minimal element with respect to set inclusion in ${\mathcal A}_{\mathcal F}(M)$. Let such a set be denoted as $A_0$.
We claim that $A_0$ is a singleton and so its element is a common fixed point for all mappings in $\mathcal F$ . 

By contradiction, suppose that $\delta (A_0)>0$.  To simplify the notation, for a subset $C\subset M$ we denote
	$$
{\rm cov} (C):=  \bigcap\{ B\; :\; B \text{ is a closed ball and $C\subseteq B$}\},
$$
In other words, ${\rm cov}(C)$ is the least admissible set containing  $C$.

	Since $A_0$ is  $T$-invariant for every $T\in \mathcal F$, we have that
	$$
	\displaystyle\bigcup_{T\in \mathcal F} T(A_0)\subseteq A_0.
	$$
	Since $A_0$ is admissible,
	$$
	{\rm cov}(\displaystyle\bigcup_{T\in \mathcal F} T(A_0))\subseteq A_0.
	$$
	Moreover, for $S\in \mathcal F$,
	$$
	S({\rm cov}(\displaystyle\bigcup_{T\in \mathcal F} T(A_0)))\subseteq S(A_0)\subseteq {\rm cov}(\displaystyle\bigcup_{T\in \mathcal F} T(A_0)).
	$$
	Therefore, 
	$$
	{\rm cov}(\displaystyle\bigcup_{T\in \mathcal F} T(A_0))\in {\mathcal A}_{\mathcal F}(M).
	$$
	and  the minimality of $A_0$ implies that
	\begin{equation}\label{A0equal}
	A_0={\rm cov}(\displaystyle\bigcup_{T\in \mathcal F} T(A_0)).
	\end{equation}
	
	Now, using the metric normal structure of the space, there exist 
	$a_0\in A_0$ such that $D(a_0,A_0)<\delta(A_0)$, which implies that
	$A_0\subseteq B(a_0,r)$ where $r:=D(a_0,A_0)$ and
	$$
	a_0\in \tilde{A}_0:=A_0\cap (\bigcap_{a\in A_0}B(a,r)).
	$$
	Notice that $\tilde{A}_0$ is a nonempty admissible set  strictly contained in $A_0$ since $\delta(\tilde{A}_0)\le r<\delta(A_0)$. 	If we show that $\tilde{A}_0$ is $T$-invariant for every $T\in \mathcal F$ we will have met a contradiction with the minimality of $A_0$:
	
	 Let $T\in \mathcal F$. Since $A_0\subseteq B(a_0,r)$, we have that $d(a_0,a)\le r$ for $a\in A_0$, but, since $A_0$ is $T$-invariant, we also have that $d(a_0,T^na)\le r$ for all $n\in \mathbb N$. Therefore,
	$$
	D(a_0, o_T(a))\le r
	$$
	for any $T\in \mathcal F$. Now, the interlacing property, implies that
	$$
	d(Sa_0,Ta)\le r.
	$$
	for any $S,T\in \mathcal F$. In particular, $T(A_0)\subseteq B(Sa_0,r)$ for $T, S\in \mathcal F$, and so
	$$
	\bigcup_{T\in \mathcal F} T(A_0)\subseteq B(Sa_0,r).
	$$
	From (\ref{A0equal}),
	$$
	A_0\subseteq B(Sa_0,r),
	$$
	and so 
	$$
	Sa_0\in \tilde{A}_0=A_0\cap (\bigcap_{a\in A_0}B(a,r))  
	$$
	for any $S\in \mathcal F$. That is, $\tilde{A}_0$ is $S$-invariant for every $S\in \mathcal F$ and as a conclusion we deduce that 
	 ${\rm Fix}(\mathcal F)$, the common fixed point set of all mappings in $\mathcal F$, is nonempty.

	In order to complete the proof, we next show that ${\rm Fix}(\mathcal F)$ is a one-local retract of $M$. Indeed, let $\{B(x_i, r_i): i\in I\}$ be a family of closed balls with $x_i\in {\rm Fix}(\mathcal F)$, for each $i$, such that $\displaystyle B=\bigcap_{i\in I} B(x_i, r_i)\ne \emptyset$. 
	
	We claim that $B\cap {\rm Fix}({\mathcal F})\ne \emptyset$. In fact, since $B$ is admissible, it turns out that $\mathcal{A}(B)$ inherits compactness and $B$ has metric normal structure. Furthermore, $B$ is $T$-invariant for any $T\in \mathcal F$. Indeed, let $y\in B$, then for each $i\in I$ we have that
	$$
	d(Ty,x_i)=d(Ty, Tx_i)\le \sup\{ D(y,o_S(x_i))\colon S\in {\mathcal F}\}=d(y,x_i)\le r_i.
	$$ 
	Repeating the first part of this proof for the metric space $B$, there is some $x\in B$ such that $T(x)=x$ for every $T\in \mathcal F$, which shows that $B\cap Fix(T)\ne \emptyset$.
\end{proof}

If we put together Theorem \ref{normal} with Proposition \ref{prop:groups} we obtain the next common fixed point result for the action of a group. The result seems to be unknown even for the case of nonexpansivity.

\begin{corollary}\label{g}
	Let $(M,d)$ be a bounded metric space with metric normal structure such that ${\mathcal A}(M)$ is compact. Let ${\mathcal S}$ be a group action over $M$ formed by  orbit-nonexpansive mappings.  Then there is $x\in M$ such that $s(x)=x$ for all $s\in \mathcal{S}$. 	
\end{corollary}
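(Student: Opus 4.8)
The plan is to derive this statement directly from the two preceding results, so the proof reduces to checking that their hypotheses are met and then assembling them. The key observation is that the group structure of $\mathcal{S}$ is precisely what converts a collection of individually orbit-nonexpansive operators into a family satisfying the interlacing condition of Definition \ref{interlaced}; once this is recognized, the single common-fixed-point theorem does all the work.

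First I would invoke Proposition \ref{prop:groups}: since $\mathcal{S}$ is a group acting on $M$ and each of its elements is orbit-nonexpansive, the family $\mathcal{S}$ is interlaced orbit-nonexpansive. The mechanism here is the rewriting $tx = t(t^{-1}s)y$ together with closure of $\mathcal{S}$ under composition and inversion, which forces the mixed distance $d(tx,sy)$ to be controlled by the supremum of the orbit radii $D(x,o_r(y))$ taken over $r\in\mathcal{S}$. In the corollary's hypotheses, $\mathcal{S}$ is assumed to be exactly such a group action by orbit-nonexpansive maps, so Proposition \ref{prop:groups} applies verbatim.

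Next I would apply Theorem \ref{normal} with $\mathcal{F}:=\mathcal{S}$. By hypothesis $(M,d)$ is bounded, has metric normal structure, and has compact $\mathcal{A}(M)$; by the previous step $\mathcal{S}$ is a family of interlaced orbit-nonexpansive self-mappings. Theorem \ref{normal} then produces a nonempty common fixed point set $\mathrm{Fix}(\mathcal{S})$, and selecting any $x\in\mathrm{Fix}(\mathcal{S})$ yields $s(x)=x$ for every $s\in\mathcal{S}$, which is the desired conclusion.

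Since both ingredients are already in hand, I do not expect a genuine obstacle here; the only point requiring care is recognizing that Proposition \ref{prop:groups} is the bridge licensing the use of Theorem \ref{normal} for a group action---without the interlacing reformulation the group would not obviously fall under a single common-fixed-point umbrella, because the operators need not share the same orbit structure. I would also note, as a free consequence of Theorem \ref{normal}, that $\mathrm{Fix}(\mathcal{S})$ is in fact a one-local retract of $M$, even though the corollary only asserts its nonemptiness.
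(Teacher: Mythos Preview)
Your proposal is correct and follows exactly the approach indicated in the paper: the corollary is obtained by combining Proposition \ref{prop:groups} (which shows the group $\mathcal{S}$ is interlaced orbit-nonexpansive) with Theorem \ref{normal}. Your additional observation that $\mathrm{Fix}(\mathcal{S})$ is a one-local retract of $M$ is also valid, as it comes for free from Theorem \ref{normal}.
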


\begin{remark}
Note that Corollary \ref{g} clearly extends  the common fixed point results given for onto isometries defined on a convex weakly compact subset of a Banach space with normal structure proved by Brodski\u{i} and Milman in \cite{BM} (see also \cite{HH}), and on a bounded hyperconvex metric space stated in \cite[Proposition 1.2]{La}. 
\end{remark}

When just a single mapping is considered, we can  deduce the following:

\begin{corollary}\label{coro:normal} Let $(M,d)$ be a bounded metric space with metric normal structure and such that $\mathcal{A}(M)$ is compact. Let $T:M\to M$ be  orbit-nonexpansive. Then $T$ has a fixed point. Moreover, the fixed point set of $T$, ${\rm Fix}(T)$, is a one-local retract of $M$. 
\end{corollary}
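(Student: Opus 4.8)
The plan is to obtain Corollary~\ref{coro:normal} as a direct specialization of Theorem~\ref{normal}. The key observation, already recorded in the remark immediately following Definition~\ref{interlaced}, is that a single mapping $T:M\to M$ is orbit-nonexpansive if and only if the singleton family $\mathcal F:=\{T\}$ is interlaced orbit-nonexpansive. Indeed, for a one-element family the interlacing inequality
$$
d(Tx,Ty)\le \sup\{D(x,o_R(y))\colon R\in\{T\}\}=D(x,o_T(y))
$$
is literally the defining inequality \eqref{orbit} of Definition~\ref{def1}. So the first and only substantive step is to verify that the hypotheses of Theorem~\ref{normal} are met by this singleton family.

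Concretely, I would argue as follows. Since $(M,d)$ is a bounded metric space with metric normal structure and $\mathcal A(M)$ is compact, the ambient hypotheses of Theorem~\ref{normal} hold verbatim. Taking $\mathcal F:=\{T\}$, the fact that $T$ is orbit-nonexpansive gives, via the equivalence above, that $\mathcal F$ is a family of interlaced orbit-nonexpansive self-mappings. Theorem~\ref{normal} then applies and yields a common fixed point of all mappings in $\mathcal F$; but $\mathcal F$ has a single member $T$, so this common fixed point is simply a fixed point of $T$, establishing $\mathrm{Fix}(T)\ne\emptyset$. Moreover, the theorem asserts that $\mathrm{Fix}(\mathcal F)$ is a one-local retract of $M$, and since $\mathrm{Fix}(\mathcal F)=\mathrm{Fix}(T)$ for the singleton family, we conclude that $\mathrm{Fix}(T)$ is a one-local retract of $M$, as required.

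There is essentially no obstacle here beyond bookkeeping: the whole content has been front-loaded into Theorem~\ref{normal}, and the corollary is the $\lvert\mathcal F\rvert=1$ instance. The only point worth stating carefully is that the sup over a singleton index set collapses to a single term, so that the interlacing condition degenerates exactly to the orbit-nonexpansivity condition \eqref{orbit} with no additional requirement to check. If one wished to avoid invoking the group-theoretic machinery of Proposition~\ref{prop:groups} (which is irrelevant for a single mapping), this specialization is the cleanest route. Thus the proof is a one-line deduction: \emph{Apply Theorem~\ref{normal} to the singleton family $\mathcal F=\{T\}$, noting that interlaced orbit-nonexpansivity of $\{T\}$ is equivalent to orbit-nonexpansivity of $T$, and that $\mathrm{Fix}(\{T\})=\mathrm{Fix}(T)$.}
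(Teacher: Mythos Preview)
Your proposal is correct and matches the paper's approach exactly: the corollary is stated without proof as the $\lvert\mathcal F\rvert=1$ instance of Theorem~\ref{normal}, using the observation (recorded just after Definition~\ref{interlaced}) that $T$ is orbit-nonexpansive if and only if $\{T\}$ is interlaced orbit-nonexpansive.
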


\begin{remark}\label{prus} In the approach followed along the proof of Theorem \ref{normal},  the boundedness of the metric space was used to assure that $M$ belongs to the family of admissible sets. We could raise the question whether the boundedness of the metric space could be replaced  by the boundedness of the orbits of the mapping $T$. The following example, due to S. Prus, shows that this is not possible:
Let $T:\ell_\infty\to \ell_\infty$ be given by $T(x)=(1+\limsup_n x_n, x_1,x_2,\cdots,x_n,\cdots)$. The mapping $T$ is fixed-point free, it is an isometry, all its orbits are bounded and the space $\ell_\infty$ is hyperconvex, which implies that it has uniform normal structure and then the family of the admissible sets is compact (see for instance \cite{EK}).

\end{remark}

Following the same arguments as in \cite{Kh} and  by using the fact that the set of all fixed points is a one-local  retract of $M$, we can derive the next common fixed point theorem which is a strict generalization of \cite[Theorem 8]{Kh} (see also \cite[Theorem 6.2]{EK} for the particular case of hyperconvex metric spaces).

\begin{corollary}\label{normal2}
	Let $(M,d)$ be a bounded metric space with metric normal structure such that $\mathcal{A}(M)$ is compact. Then any commutative family $(T_i)_{i\in I}$ of orbit-nonexpansive self-mappings   on $M$ has a common fixed point. Moreover, the set of the common fixed points  is a one-local retract of $M$.

\end{corollary}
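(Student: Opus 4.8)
The plan is to reduce the statement to the single-map Corollary \ref{coro:normal} together with the structural theory of one-local retracts developed in \cite{Kh}; the only place where orbit-nonexpansiveness (rather than plain nonexpansiveness) really intervenes is the single-map step, which is exactly what Corollary \ref{coro:normal} now supplies. From \cite{Kh} I would import three facts about one-local retracts of a space $M$ for which $\mathcal{A}(M)$ is compact and which has metric normal structure: (i) every one-local retract $D$ of $M$ is again a bounded space with metric normal structure and with $\mathcal{A}(D)$ compact; (ii) being a one-local retract is transitive, i.e. a one-local retract of a one-local retract of $M$ is a one-local retract of $M$; (iii) any family of nonempty one-local retracts of $M$ that is directed downward by inclusion (in particular any nested family) has nonempty intersection. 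Fact (iii) is the manifestation, at the level of one-local retracts, of the compactness of $\mathcal{A}(M)$.

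Commutativity enters only through invariance: if $T_iT_j=T_jT_i$ and $T_ix=x$, then $T_i(T_jx)=T_j(T_ix)=T_jx$, so each $T_j$ leaves ${\rm Fix}(T_i)$ invariant, and hence leaves invariant every intersection $\bigcap_{i\in J}{\rm Fix}(T_i)$. I would first handle finite subfamilies by induction on $|J|$. For $|J|=1$ the set ${\rm Fix}(T_i)$ is a nonempty one-local retract of $M$ by Corollary \ref{coro:normal}. Assume $D:=\bigcap_{i\in J}{\rm Fix}(T_i)$ is a nonempty one-local retract and pick $k\notin J$. By fact (i), $D$ inherits compactness of $\mathcal{A}(D)$ and metric normal structure; by the invariance above $T_k$ maps $D$ into $D$; and $T_k|_D$ is again orbit-nonexpansive as a self-map of $D$, because $D$ is $T_k$-invariant, so $o_{T_k}(y)\subseteq D$ and $D(x,o_{T_k}(y))$ is computed identically in $D$ and in $M$, whence (\ref{orbit}) persists on $D$. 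Applying Corollary \ref{coro:normal} to $T_k|_D\colon D\to D$ shows that ${\rm Fix}(T_k)\cap D={\rm Fix}(T_k|_D)$ is a nonempty one-local retract of $D$, and therefore of $M$ by transitivity (fact (ii)). Thus $\bigcap_{i\in J}{\rm Fix}(T_i)$ is a nonempty one-local retract of $M$ for every finite $J\subseteq I$.

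To pass to the whole family, note that the collection $\{\bigcap_{i\in J}{\rm Fix}(T_i)\colon J\subseteq I\text{ finite}\}$ is a downward directed family of nonempty one-local retracts, since the set indexed by $J_1\cup J_2$ is contained in those indexed by $J_1$ and by $J_2$. Fact (iii) then forces ${\rm Fix}(\mathcal F)=\bigcap_{i\in I}{\rm Fix}(T_i)\neq\emptyset$, which produces the common fixed point. (Equivalently, one may run Zorn's Lemma on the nonempty $(T_i)$-invariant one-local retracts of $M$ ordered by reverse inclusion: a minimal element $D_0$ satisfies, for each $i$, that ${\rm Fix}(T_i)\cap D_0$ is again a nonempty $(T_i)$-invariant one-local retract contained in $D_0$, so minimality gives $D_0\subseteq{\rm Fix}(T_i)$ for all $i$, and every point of $D_0$ is then a common fixed point.) I expect this passage from finite to arbitrary families to be the main obstacle: one-local retracts need not be admissible, so the compactness of $\mathcal{A}(M)$ reaches them only through the structural results of \cite{Kh}, and one must also be careful to verify — as above — that restriction to an invariant one-local retract preserves orbit-nonexpansiveness, which is the one spot where the orbit condition, rather than ordinary nonexpansiveness, must be rechecked.

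Finally, for the ``moreover'' part I would mimic the closing paragraph of the proof of Theorem \ref{normal}. Let $\{B(x_j,r_j)\colon j\in J\}$ be closed balls with $x_j\in{\rm Fix}(\mathcal F)$ and $B:=\bigcap_{j\in J}B(x_j,r_j)\neq\emptyset$. Then $B$ is a nonempty admissible set, so $\mathcal{A}(B)$ is compact and $B$ has metric normal structure, and $B$ is invariant under every $T_i$: for $y\in B$ and each $j$, since $x_j$ is fixed by $T_i$ we have $o_{T_i}(x_j)=\{x_j\}$, and hence $d(T_iy,x_j)=d(T_iy,T_ix_j)\le D(y,o_{T_i}(x_j))=d(y,x_j)\le r_j$, so $T_iy\in B$. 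Applying the existence part already proved to the commuting family $\{T_i|_B\}$ on $B$ yields a common fixed point in $B$, that is $B\cap{\rm Fix}(\mathcal F)\neq\emptyset$. By Definition \ref{1local} this shows ${\rm Fix}(\mathcal F)$ is a one-local retract of $M$, completing the proof.
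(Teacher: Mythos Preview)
Your proposal is correct and follows essentially the same route as the paper: the paper's proof is the single sentence ``follows in the same way as those of Theorems 7 and 8 in \cite{Kh}'', and what you have written is precisely a careful unpacking of that reference, with Corollary~\ref{coro:normal} substituted for the single-map nonexpansive result used in \cite{Kh}. Your checks that restriction to an invariant one-local retract preserves orbit-nonexpansiveness and that balls centered at common fixed points are invariant are exactly the spots where the orbit condition must be re-examined, and they go through as you indicate.
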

\begin{proof}
	The proof of this corollary follows in the same way as those of Theorems 7 and 8 in \cite{Kh}.
	\end{proof}

Corollary \ref{normal2} extends Theorem \ref{1}, Theorem \ref{2} and Theorem \ref{3}, since for every bounded complete CAT(0) space the family of admissible sets is therefore compact (see \cite{LS} and \cite[Theorem 5.4]{KK}). Also, the more general class of uniformly convex metric spaces studied in \cite{EFP} satisfies conditions of Corollary \ref{normal2}. 
We finish this section with some applications  to the Banach space setting:

\begin{corollary}\label{banach} Let $M$ be a convex closed bounded set of a Banach space $X$ satisfying one of the following conditions:

\begin{itemize}

\item[i)] $X$ has weak normal structure and $M$ is a weakly compact set, 

\item[ii)] $X$ is a dual space with weak$^*$ normal structure and $M$ is a weak$^*$ compact set, 

\item[iii)] $X=L_1[0,1]$ and $M$ is a compact in measure set.

\end{itemize}
Then every commuting family $(T_i)_{i\in I}$ of orbit-nonexpansive self-mappings on $M$ has a common fixed point. Additionally, the set of common fixed points of this family of mappings is a one-local retract of $M$. Furthermore, if ${\mathcal S}$ is a group of orbit-nonexpansive operators  defined on $M$, there is $x\in M$ such that $s(x)=x$ for all $s\in \mathcal{S}$.

\end{corollary}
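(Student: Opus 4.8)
The plan is to reduce every case to Corollary \ref{normal2} (for the commuting family, including the one-local retract conclusion) and to Corollary \ref{g} (for the group $\mathcal S$), since both require only that the bounded metric space $(M,d)$ has metric normal structure and that $\mathcal A(M)$ is compact. As $M$ is a bounded subset of $X$, it is a bounded metric space under the norm distance, so the entire task is to verify these two properties in each of (i), (ii) and (iii).

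The unifying device is that in each case there is a topology $\tau$ on $X$ for which the closed balls are $\tau$-closed: the weak topology in (i), the weak$^*$ topology in (ii), and the topology of convergence in measure in (iii). For (i) and (ii) this is standard; for (iii) it follows from the lower semicontinuity of the $L_1$-norm with respect to convergence in measure (pass to an a.e.-convergent subsequence and apply Fatou). Consequently every closed ball of $(M,d)$, being of the form $M\cap B_X(x,r)$, is a convex $\tau$-closed subset of $M$, and hence so is each admissible set $A\in\mathcal A(M)$, being an intersection of such balls.

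Two consequences follow. First, compactness of $\mathcal A(M)$: in each case $M$ is $\tau$-compact, so each admissible set is a $\tau$-closed subset of a $\tau$-compact set, hence $\tau$-compact, whence any subfamily of $\mathcal A(M)$ with the finite intersection property has nonempty intersection. Second, metric normal structure: the admissible sets of $M$ form a subfamily of the convexity structure of convex $\tau$-closed bounded subsets of $X$, on which $D$ and $\delta$ agree with their values computed in $(M,d)$. In cases (i) and (ii) the assumed weak (resp. weak$^*$) normal structure asserts exactly that every non-singleton member of that larger family has a non-diametral point, and restricting this to admissible sets gives metric normal structure; in case (iii) the same holds because, by the result of Lennard recalled in Section 2 (see \cite{Le}), every convex compact-in-measure subset of $L_1[0,1]$ has $\tau$-normal structure, and $M$ is such a set. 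With both hypotheses established, Corollary \ref{normal2} and Corollary \ref{g} deliver all the assertions.

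The argument is essentially a verification, so there is no single hard step; the only genuinely external inputs are the $\tau$-closedness of $L_1$-balls and Lennard's normal structure theorem, both entering in case (iii). I therefore expect case (iii) to be the one demanding the most attention, precisely because normal structure there is not hypothesized but must be imported from the cited result, whereas in (i) and (ii) it is granted outright.
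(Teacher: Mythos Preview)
Your proposal is correct and follows exactly the route the paper intends: the corollary is stated without an explicit proof, relying on the discussion in Section~2 (that admissible sets are convex $\tau$-closed and hence $\tau$-compact when $M$ is, and that weak/weak$^*$/$\tau$-normal structure implies metric normal structure, with Lennard's theorem \cite{Le} supplying the latter in case (iii)), together with Corollaries~\ref{g} and~\ref{normal2}. Your write-up simply makes these implicit reductions explicit.
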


\section{Uniform relative normal structure and fixed points for orbit-nonexpansive mappings}

P. Soardi defined in \cite{S} a geometric property in Banach spaces related to the normal structure and providing  fixed points for nonexpansive operators:  the uniform relative normal structure (URNS). This property was  useful  to cover the case  of $L^\infty$-spaces and, more generally, AM-spaces  where the standard normal structure or uniform normal structure do not generally work, in particular when  complex Banach lattices  are considered. The interested reader can find more details in \cite{S}.  
 For the action of groups, the concept of uniform relative normal structure was later used by A. To-Ming Lau  \cite[Theorem 1]{TLau}
 to obtain a common fixed point  for (onto) isometries  defined on a  closed convex bounded subset of a Banach space.  
In \cite[Chapter 5]{KK}, the uniform relative normal structure is defined for the  general environment of metric spaces, with only intersections of balls being considered for its definition. 
In this section we  prove that this metric extension  indeed implies the existence of fixed points for orbit-nonexpansive mappings.  In fact, we will extend Soardi Theorem in \cite{S},   its metric version given in  \cite[Theorem 5.6]{KK} and its corresponding generalization obtained in \cite[Section 4]{EJS} for a single mapping and for the action of groups in the orbit-nonexpansive setting. Notice that some arguments have to be redefined due to the lack of continuity assumptions.   We recall the metric definition of URNS that can be found  in \cite{KK} and the corresponding theorem proved there:

\begin{definition}\label{RUNS} A metric space $\left(M,d\right)$ is said to have  uniform relative normal structure (URNS) if there exists some $c\in\left(0,1\right)$ such that, for every admissible set $A$ with $\delta\left(A\right)>0$, the following conditions are satisfied:

\begin{itemize}
\itemsep=-0.2em
\item [i)] There exists $z_A\in M$ such that 
$
D(z_A,A)\leq c \ \delta\left(A\right).
 $

\item [ii)] If $x\in M$ is such that $D(x,A)\leq c \ \delta\left(A\right)$ then 
$
d\left(x,z_A\right)\leq c \ \delta\left(A\right).
$

\end{itemize}

\end{definition}

\begin{theorem}\label{RUNSFPP} \cite[Theorem 5.6]{KK}, \cite{S} Let $\left(M,d\right)$ be a bounded metric space with URNS and such that
$\mathcal{A}\left(M\right)$ is compact.
Then every nonexpansive mapping $T:M\to M$ has a fixed point. 

\end{theorem}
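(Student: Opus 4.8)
The plan is to mimic the minimal-invariant-set argument already used for Theorem \ref{normal}, replacing the internal self-center produced by metric normal structure with the relative center supplied by URNS. First I would invoke Zorn's Lemma together with the compactness of $\mathcal{A}(M)$ to produce a minimal element $A_0$ of the (nonempty, since $M\in\mathcal{A}(M)$) family of nonempty admissible subsets of $M$ that are $T$-invariant. Exactly as in the proof of Theorem \ref{normal}, minimality forces $A_0=\mathrm{cov}(T(A_0))$; moreover, since $T(A_0)\subseteq A_0$ gives $T^{n+1}(A_0)\subseteq T^n(A_0)$ and $\mathrm{cov}(T^n(A_0))$ is again $T$-invariant and admissible, minimality yields the stronger self-covering property $A_0=\mathrm{cov}(T^n(A_0))$ for every $n$. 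Assume, towards a contradiction, that $d_0:=\delta(A_0)>0$.

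Next I would bring URNS into play. By part i) of Definition \ref{RUNS}, the relative Chebyshev radius $r:=\inf_{x\in M}D(x,A_0)$ satisfies $r\le c\,d_0<d_0$, and compactness of $\mathcal{A}(M)$ guarantees that the center set $C:=\{x\in M: D(x,A_0)\le r\}=\bigcap_{a\in A_0}B(a,r)$ is a nonempty admissible set. Two observations drive the argument. The first is that $C$ is $T$-invariant and that $T$ does not increase the center radius: if $A_0\subseteq B(x,s)$, nonexpansiveness gives $T(A_0)\subseteq B(Tx,s)$, whence $A_0=\mathrm{cov}(T(A_0))\subseteq B(Tx,s)$ and so $D(Tx,A_0)\le D(x,A_0)$; in particular $T(C)\subseteq C$. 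The second is that part ii) of Definition \ref{RUNS} confines $C$: every $x\in C$ has $D(x,A_0)\le r\le c\,d_0$, hence $d(x,z_{A_0})\le c\,d_0$, so that $C\subseteq B(z_{A_0},c\,d_0)$.

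From here the goal is to contradict the minimality of $A_0$, either by exhibiting a strictly smaller $T$-invariant admissible subset or by passing to a minimal $T$-invariant admissible subset $A_1\subseteq C$ and iterating so that the diameters collapse, extracting a fixed point from the compactness of $\mathcal{A}(M)$. I expect \emph{this collapse step} to be where the real work lies. The main obstacle is precisely the feature that distinguishes URNS from metric normal structure: the center $z_{A_0}$, and the whole set $C$, may lie \emph{outside} $A_0$, so the clean shrinking device of Theorem \ref{normal}---intersecting $A_0$ with the balls $B(a,\cdot)$, $a\in A_0$, about a self-center lying in $A_0$---is simply not available, and one cannot conclude that $C\subsetneq A_0$. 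Condition ii) is the compensating tool, since it herds all near-optimal centers into the single ball $B(z_{A_0},c\,d_0)$; but the naive estimate it yields is only $\delta(C)\le 2c\,d_0$, which fails to beat $d_0$ once $c\ge\tfrac12$. The delicate point will therefore be to combine this confinement with the $T$-invariance and the radius-nonincrease established above so as to extract a \emph{genuine strict decrease} of the diameter along the iteration, thereby handling the $L^\infty$/AM-space phenomenon of centers sitting outside the set. Note finally that, $T$ being nonexpansive, no separate continuity argument is required here, in contrast with the orbit-nonexpansive results of the previous section.
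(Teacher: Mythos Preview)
Your proposal has the right scaffolding---minimal $T$-invariant admissible set $A_0$, the observation that $B[A_0,c\,\delta_0]$ is $T$-invariant because $A_0=\mathrm{cov}(T(A_0))$, and the plan to iterate rather than contradict minimality directly (the latter cannot work, since the center set need not sit inside $A_0$). But you stop precisely where the argument begins, and you are missing the one idea that resolves the obstacle you correctly flag.

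The paper does not prove Theorem~\ref{RUNSFPP} separately; it is recovered from Theorem~\ref{PQRUNSFPP} with $p=q=c$ and $\mathcal{F}=\{T\}$. The device that gets past your ``$2c\,\delta_0$'' barrier is not to work with $C=B[A_0,c\,\delta_0]$ alone, but with the \emph{double} intersection
\[
D_0:=B[A_0,c\,\delta_0]\ \cap\ B\bigl[B[A_0,c\,\delta_0],\,c\,\delta_0\bigr],
\]
which is nonempty exactly by conditions (i)--(ii) of URNS (the point $z_{A_0}$ lies in both factors). Any two points $x,y\in D_0$ satisfy $x\in B[A_0,c\,\delta_0]$ and $y\in B[B[A_0,c\,\delta_0],c\,\delta_0]$, whence $d(x,y)\le c\,\delta_0$; so $\delta(D_0)\le c\,\delta_0$, with no factor $2$. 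The set $D_0$ is admissible but need not be $T$-invariant, so one passes to $\tilde A_0:=\mathrm{cov}_{\mathcal F}(D_0)$, the smallest $T$-invariant admissible set containing $D_0$. Since $B[A_0,c\,\delta_0]$ is itself $T$-invariant (your own observation), $\tilde A_0\subset B[A_0,c\,\delta_0]$. The nontrivial step---and the substance absent from your sketch---is a \emph{bootstrap}: one shows that $\tilde A_0\cap B[\tilde A_0,c\,\delta_0]$ is $T$-invariant and contains $D_0$, forcing $\tilde A_0\subset B[\tilde A_0,c\,\delta_0]$, i.e.\ $\delta(\tilde A_0)\le c\,\delta_0$. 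Choosing a minimal $T$-invariant admissible $A_1\subset\tilde A_0$ and iterating yields $\delta(A_n)\le c^n\delta_0$ and $d(x_n,x_{n-1})\le c\,\delta(A_{n-1})$, hence a Cauchy sequence whose limit is fixed by continuity of $T$.

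In short, your plan never produces a set of diameter $\le c\,\delta_0$; condition (ii) is being used only as ``$C\subset B(z_{A_0},c\,\delta_0)$'', which is too weak. Its correct role is to certify $D_0\ne\emptyset$, and the diameter collapse then comes from the algebra of the sets $B[\,\cdot\,,r]$ together with the $\mathrm{cov}_{\mathcal F}$ bootstrap, not from any finer estimate on $C$ itself.
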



For a better handling of the uniform relative normal structure, we will use 
 the notation introduced in \cite{EJS}: 
 For $A\subset M$ and $r>0$, let us denote 
\begin{center}
$B\left[A,r\right]:=\displaystyle\bigcap_{x\in A}B\left(x,r\right)=\left\{y\in M:A\subset B\left(y,r\right)\right\}$.
\end{center}
The set $B\left[A,r\right]$ is admissible and  $y\in B[A,r]$ if and only if $A\subset B(y,r)$  if and only if $D(y,A)\le r$. Besides $\delta(A\cap B[A,r])\leq r$ (although it can be an empty set). 
Hence, condition i) in Definition \ref{RUNS} can be equivalently written  as 
$$
B[A, c\ \delta(A)]\ne \emptyset.
$$
Finally, conditions i) and ii)  in Definition \ref{RUNS} can be expressed as a unique statement given by
\[
 \hspace{0.05cm} B\left[A,c \ \delta\left(A\right)\right]\cap B\left[B[A,c \ \delta\left(A\right)],c \ \delta\left(A\right)\right]\neq\emptyset.
\]
for every $A\in\mathcal{A}(M)$ with $\delta(A)>0$. 


There is a subtle but important difference between Definition \ref{RUNS} and the concepts of metric normal structure and  UNS introduced in Section 2 . Note that UNS  can be reformulated by the existence of some $c\in(0,1)$ such that
$$
A\cap B[A, c\ \delta(A)]\ne \emptyset
$$
for all admissible set $A$ with positive diameter.
The point $z_A$ given in Definition \ref{RUNS}  belongs to  $B[A, c\ \delta(A)]$ but it needs not belong to the set $A$ and that is why the extra condition ii) is required. In particular, if a metric space $\left(M,d\right)$ has uniform normal structure then it has uniform relative normal structure by using the same parameter.

A new property that formally weakens URNS was introduced in \cite{EJS} and it was used to prove that, for some metric spaces, the original metric can  be slightly altered such that the following property is still preserved.

\begin{definition}\cite{EJS}\label{pqRUNS} A metric space $\left(M,d\right)$ is said to have $\left(p,q\right)$-URNS for some $p>0$ and $q\in\left(0,1\right)$ if

\begin{center}
$B\left[A,p \ \delta\left(A\right)\right]\cap B\left[B\left[A,p \ \delta\left(A\right)\right],q \ \delta\left(A\right)\right]\neq\emptyset$ 

\end{center}

\noindent for every $A\in\mathcal{A}\left(M\right)$ with $\delta(A)>0$.

\end{definition}

\begin{remark}
Observe that when $p=q\in\left(0,1\right)$, Definition \ref{pqRUNS} is just Definition \ref{RUNS} for the parameter $c:=q$. Hence, $\left(p,q\right)$-URNS provides a formal extension of  URNS. The advantage of considering $(p,q)$-URNS lies in the fact that, playing with the parameters $p$ and $q$, it is possible to prove that for some metric spaces,  as it is the case of hyperconvex spaces,
  the $(p,q)$-URNS is stable when the hyperconvex metric is slightly altered to give place to a second equivalent distance not too far enough from the original one (see \cite[Section 4]{EJS} for a complete exposition of this fact).
\end{remark}

We next extend the main theorem in \cite[Section 4]{EJS} to orbit-nonexpansive mappings. The following notation will be needed: Given $(M,d)$ a metric space and $\mathcal{F}$ a family of self-mappings defined on $M$, the $\mathcal{F}$-{\it admissible} cover of a set $E\subset M$ is defined as:
$$
{\rm cov}_{\mathcal{F}}(E)=\bigcap\left\{A\in\mathcal{A}(M): E\subset A, T(A)\subset A \ \text{for all } T\in\mathcal{F}\right\}.
$$

It is immediate to see that ${\rm cov}_{\mathcal{F}}(E)$ is admissible,  contains $E$ and is $T$-invariant for all $T\in\mathcal{F}$. Once more, the main theorem of this section will be proved for the case of a general interlaced orbit-nonexpansive family  and we will obtain, as  particular cases, a fixed point result for a single orbit-nonexpansive mapping and a common fixed point result for the action of a group.

\begin{theorem}\label{PQRUNSFPP} Let $\left(M,d\right)$ be a bounded metric space with $(p,q)$-URNS for some $0<q<1$ and such that $\mathcal{A}\left(M\right)$ is compact. Let $\mathcal F$ be a family of interlaced orbit-nonexpansive self-mappings on $M$. There exists a common fixed point for all mappings in $\mathcal F$.
\end{theorem}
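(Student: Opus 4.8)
The plan is to run the minimal-invariant-set machinery from the proof of Theorem \ref{normal}, but to replace its single contradiction step by an iterative scheme. The reason a single step no longer suffices is that relative normal structure only provides a ``centre'' that may lie \emph{outside} the set under consideration, so one cannot intersect with that set and invoke minimality directly. As preliminaries I would use that compactness of $\mathcal{A}(M)$ forces $M$ to be complete, and I would work throughout with the operator ${\rm cov}_{\mathcal{F}}$ and with the admissible sets $B[A,r]$, recalling the bound $\delta(A\cap B[A,r])\le r$ recorded above.

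First I would apply Zorn's Lemma together with compactness of $\mathcal{A}(M)$ to obtain a set $A$ that is minimal among the nonempty $\mathcal{F}$-invariant admissible subsets of $M$; exactly as in (\ref{A0equal}), minimality yields ${\rm cov}(\bigcup_{T\in\mathcal{F}}T(A))=A$. The central lemma I would then prove is that, for such a cov-stable invariant set $A$, the relative centre set $W:=B[A,p\,\delta(A)]$ is again $\mathcal{F}$-invariant. This is where the interlacing hypothesis does the work: if $z\in W$ then $A\subseteq B(z,p\,\delta(A))$, invariance of $A$ gives $o_T(a)\subseteq A\subseteq B(z,p\,\delta(A))$ for every $a\in A$, hence $D(z,o_T(a))\le p\,\delta(A)$, and the interlacing inequality upgrades this to $d(Sz,Ta)\le p\,\delta(A)$ for all $S,T\in\mathcal{F}$; combined with ${\rm cov}(\bigcup_T T(A))=A$ this gives $A\subseteq B(Sz,p\,\delta(A))$, that is, $Sz\in W$.

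Next I would feed in the relative normal structure. By Definition \ref{pqRUNS} the set $W\cap B[W,q\,\delta(A)]$ is nonempty, and by $\delta(A\cap B[A,r])\le r$ it has diameter at most $q\,\delta(A)$. Choosing a point $z$ in it and setting $A':={\rm cov}_{\mathcal{F}}(\{z\})$, I would note $A'\subseteq W\subseteq B(z,q\,\delta(A))$, so $A'$ is an $\mathcal{F}$-invariant admissible set confined to a ball of radius $q\,\delta(A)$ about its generator $z$. Iterating this construction (at each stage passing, where necessary, to a minimal invariant admissible subset so as to retain the cov-stability used in the central lemma, and using that $\mathcal{A}(W)$ inherits compactness) should generate invariant admissible sets and associated centres whose relevant radii are driven to $0$; completeness of $M$ then yields a limit point $x^\ast$, and I would finally check that $x^\ast\in{\rm Fix}(\mathcal{F})$.

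The main obstacle I anticipate is making this contraction genuine for \emph{every} $q\in(0,1)$. Because the relative centre $z$ typically falls outside $A$, the sets cannot be nested by intersecting with $A$ (the device available under uniform normal structure, where $A\cap B[A,c\,\delta(A)]\ne\emptyset$), and so the transparent argument ``$x^\ast\in\bigcap_nA_n$ is fixed since each $A_n$ is invariant'' is unavailable. The crude estimate $\delta(W)\le 2q\,\delta(A)$ only contracts when $q<1/2$, so the whole point is to exploit condition ii) of Definition \ref{RUNS} in the combined form $B[A,p\,\delta(A)]\cap B[B[A,p\,\delta(A)],q\,\delta(A)]$, whose diameter bound $q\,\delta(A)$ is what one must propagate along the iteration. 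A second delicate point, forced by the absence of continuity, is the verification that $x^\ast$ is actually fixed by every member of $\mathcal{F}$: this cannot be read off from continuity and must instead be extracted from the orbit-nonexpansivity inequality applied along the iterates, exactly as invariance was propagated in the central lemma.
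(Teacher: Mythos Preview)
Your overall architecture matches the paper's: pass to a minimal $\mathcal{F}$-invariant admissible set, show $W:=B[A,p\,\delta(A)]$ is $\mathcal{F}$-invariant via the interlacing inequality and the identity $A={\rm cov}(\bigcup_T T(A))$, iterate to produce a Cauchy sequence, and verify the limit is fixed using orbit-nonexpansivity rather than continuity. All of that is right, and your ``central lemma'' is exactly what the paper proves first.

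The gap is precisely the obstacle you flag but do not resolve. Your construction $A':={\rm cov}_{\mathcal F}(\{z\})$ with $z\in W\cap B[W,q\,\delta(A)]$ only yields $A'\subseteq W\subseteq B(z,q\,\delta(A))$, hence merely $\delta(A')\le 2q\,\delta(A)$; for $q\ge 1/2$ this gives no contraction and the scheme stalls. Saying that ``the diameter bound $q\,\delta(A)$ is what one must propagate'' names the target but does not hit it: nothing in your construction forces the other points of $A'$ into $B[W,q\,\delta(A)]$.

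The paper's fix is to take the $\mathcal F$-cover of the \emph{entire} set $W\cap B[W,q\,\delta(A)]$, not of a single point:
\[
\tilde A:={\rm cov}_{\mathcal F}\bigl(W\cap B[W,q\,\delta(A)]\bigr)\subseteq W,
\]
and then to prove $\delta(\tilde A)\le q\,\delta(A)$ by a \emph{second} invariance argument. Namely, one shows that $\tilde A\cap B[\tilde A,q\,\delta(A)]$ is itself $\mathcal F$-invariant: for $x$ in this set and any $T\in\mathcal F$, one checks that $B(Tx,q\,\delta(A))\cap\tilde A$ contains $W\cap B[W,q\,\delta(A)]$ and is $S$-invariant for every $S\in\mathcal F$ (again via interlacing, using $D(x,\tilde A)\le q\,\delta(A)$), hence contains $\tilde A$ by the defining minimality of ${\rm cov}_{\mathcal F}$. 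This forces $\tilde A\subseteq\tilde A\cap B[\tilde A,q\,\delta(A)]$, i.e.\ $\delta(\tilde A)\le q\,\delta(A)$. That nested use of ${\rm cov}_{\mathcal F}$ and the interlacing inequality is the missing idea in your proposal.
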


\begin{proof} Following similar arguments as in \cite{KK} (see also \cite{EJS}), we pursue to construct 
 a sequence of subsets $\left(A_n\right)_{n\ge 0}$ in $\mathcal{A}\left(M\right)$ which are $T$-invariant for all $T\in \mathcal{F}$ and  fulfilling the following properties:

\begin{itemize}
	\itemsep=-0.2em

	\item[(1)] $A_n\subset B\left[A_{n-1},p \ \delta\left(A_{n-1}\right)\right]$.
	\item[(2)] $\delta\left(A_n\right)\leq q \ \delta\left(A_{n-1}\right)$.
\end{itemize}

Assume that (1) and (2) hold and for each $n\in\mathbb{N}$, choose $x_n\in A_n$. Hence
$$
d(x_n,x_{n-1})\le p\ \delta(A_{n-1})\le p q^n\ \delta(A_0).
$$
Therefore, $(x_n)$ is a Cauchy sequence in a complete metric space (since $\mathcal{A}(M)$ is compact \cite[Proposition 3.2]{EK}). Let $x\in M$ be the limit of $(x_n)$. We cannot derive that $x$ is a fixed point from continuity assumptions as in \cite[Theorem 4.7]{EJS}.

 Fix any $T\in\mathcal{F}$. The $T$-invariance of the sets $A_n$ for all $n\ge 0$, lets us assure that 
$$
D(x_n,o_T(x_n))\le \delta(A_n)
$$
and
$$
D(x,o_T(x_n))\le d(x,x_n)+D(x_n,o_T(x_n))\le d(x,x_n)+\delta(A_n).
$$

Using that $\mathcal F$ is a family of interlaced nonexpansive self-mappings w.r.t. orbits,  for all $S\in\mathcal{F}$ we obtain:
$$
d(Sx,Sx_n)\le \sup\{D(x,o_T(x_n)): T\in\mathcal{F}\}\le d(x,x_n)+\delta(A_n).
$$
In conclusion,
$$
\begin{array}{lll}
d(x,Sx)&\le &d(x,x_n)+d(x_n,Sx_n)+d(Sx,Sx_n)\\
            &\le & d(x,x_n) +\delta(A_n)+d(x,x_n)+\delta(A_n)\to_n 0,
            \end{array}
$$
which implies that $x$ is a fixed point of $S$ for all $S\in\mathcal{F}$.

Therefore, it remains to be shown that we can construct a sequence $(A_n)_{n\ge 0}$ of admissible sets verifying (1) and (2). The proof is inspired by the one given in \cite{EJS}. 

Let $A_0\in \mathcal{A}(M)$ be minimal $T$-invariant for all $T\in\mathcal{F}$. Then $A_0={\rm cov}(\displaystyle\bigcup_{T\in\mathcal{F}}T(A_0))$. Denote $\delta_0:=\delta(A_0)$.

Let $x\in M$ such that $A_0\subset B(x,r)$ for some $r>0$. For  $y\in A_0$, $o_R(y)\subset A_0$ for all $R\in\mathcal{F}$. Besides, for  $S,T\in\mathcal{F}$  
$$
d(Sy,Tx)\le \sup\{ D(x, o_R(y)): R\in\mathcal{F}\}\le D(x,A_0)\le r,
$$
which implies that $S(A_0)\subset B(Tx,r)$ and $A_0={\rm cov}(\displaystyle\bigcup_{S\in\mathcal{F}}S(A_0))\subset B(Tx,r)$. This concludes that 
$B[A_0, p \ \delta_0]$ is a $T$-invariant admissible set for all $T\in\mathcal{F}$ (by $(p,q)$-URNS we can set $r:=p \ \delta_0$ in the previous argument).

Define
\begin{equation}\label{a}
\tilde{A}_0:={\rm cov}_{\mathcal{F}}\left(B[A_0, p\ \delta_0]\cap B\left[ B[A_0, p\ \delta_0], q\ \delta_0\right]\right).
\end{equation}

which is nonempty since $(M,d)$ has $(p,q)$-URNS.

First notice that 
\begin{equation}\label{b}
B[A_0, p\ \delta_0]\cap B[ B[A_0, p\ \delta_0], q \ \delta_0]\subset \tilde{A_0}\subset B[A_0, p \ \delta_0]
\end{equation}
where the last inclusion follows from the fact that $B[A_0, p\ \delta_0]$ is $T$-invariant for all $T\in\mathcal{F}$. Additionally, the above implies that
\begin{equation}\label{bc}
B\left[B[A_0, p \ \delta_0],q\ \delta_0\right]\subset B[\tilde{A_0}, q\ \delta_0].
\end{equation}

\medskip 

We next claim that $\delta(\tilde{A_0})\le q\ \delta_0$, that is, $\tilde{A_0}\subset \tilde{A_0}\cap B[\tilde{A_0}, q\ \delta_0]$. To prove the claim, from (\ref{b}) and (\ref{bc}) it can easily be checked that 
$$
B[A_0, p\ \delta_0]\cap B[ B[A_0, p\ \delta_0], q \delta_0]\subset \tilde{A_0}\cap B[\tilde{A_0}, q\ \delta_0].
$$
If we prove that $\tilde{A_0}\cap B[\tilde{A_0}, q\ \delta_0]$ is $T$-invariant for all $T\in\mathcal{F}$, the claim immediately follows. Let $T\in\mathcal{F}$ and 
$$
x\in\tilde{A_0} \cap B[\tilde{A_0}, q\ \delta_0],
$$ 
which implies that $Tx\in \tilde{A_0}$. We want to prove that $Tx\in B[\tilde{A_0}, q\ \delta_0]$, which is equivalent to proving  $\tilde{A_0}\subset B(Tx, q\ \delta_0)\cap \tilde{A_0}$. Notice that 
$$
d(y, Tx)\le q\ \delta_0 
$$
for all $y\in B[A_0, p\ \delta_0]\cap B[ B[A_0, p\ \delta_0], q\ \delta_0]$, by $(\ref{b})$. Hence
$$
B[A_0, p\ \delta_0]\cap B[ B[A_0, p \ \delta_0], q\ \delta_0]\subset B(Tx, q\  \delta_0)\cap \tilde{A}_0.
$$
The proof will finish after checking that $ B(Tx, q\  \delta_0)\cap \tilde{A}_0$ is $S$ invariant for all $S\in\mathcal{F}$.

Let $z\in B(Tx,q\ \delta_0)\cap \tilde{A_0}$. In particular $o_R(z)\subset \tilde{A}_0$ for all $R\in\mathcal{F}$. Additionally, for any $S\in\mathcal{F}$ we have that, 
$$
d(Tx,Sz)\le \sup\{D(x, o_R(z)): R\in\mathcal{F}\}\le D(x,\tilde{A_0})\le  q\ \delta_0,  
$$
since $x\in B[\tilde{A_0}, q\ \delta_0]$.

The above implies that $B(Tx,q\ \delta_0)\cap \tilde{A_0}$ is $S$-invariant for all $S\in\mathcal{F}$ and therefore that $\tilde{A}_0\subset B(Tx,q\ \delta_0)\cap \tilde{A}_0$. In particular $Tx\in \tilde{A}_0\cap B[\tilde{A}_0,q\ \delta_0]$ and consequently the claim is proved. 


Thus, we have found an admissible subset $\tilde{A}_0\subset B[A_0,p\ \delta_0]$  which is $T$-invariant for all $T\in\mathcal{F}$ and such that $\diam(\tilde{A}_0)\le q\ \delta_0$. We proceed by using Zorn's Lemma and find $A_1\subset \tilde{A_0}$ an admissible set which is  minimal and  $T$-invariant for all $T\in\mathcal{F}$. Following the proof by recursion we construct the sequence $(A_n)_{n\ge 0}$ as wished and so the proof is complete. 
 
 \end{proof}

\begin{corollary}
Let $\left(M,d\right)$ be a bounded metric space with uniform relative normal structure and such that $\mathcal{A}\left(M\right)$ is compact. Let $T:M\to M$ be an orbit-nonexpansive mapping. Then $T$ has a fixed point. 
\end{corollary}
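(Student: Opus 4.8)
The plan is to derive this statement as an immediate specialization of Theorem \ref{PQRUNSFPP}, reducing both the hypothesis on the space and the class of maps to the single-mapping setting. First I would observe that URNS is a particular instance of $(p,q)$-URNS: as noted in the remark following Definition \ref{pqRUNS}, setting $p=q=c$ recovers Definition \ref{RUNS} verbatim with parameter $c$. Thus a metric space with URNS, say with constant $c\in(0,1)$, automatically enjoys $(p,q)$-URNS with $p=q=c$ and $0<q<1$, which is exactly the structural hypothesis required in Theorem \ref{PQRUNSFPP}.

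Second, I would pass from a single mapping to an interlaced family. The remark immediately following Definition \ref{interlaced} records that $T:M\to M$ is orbit-nonexpansive precisely when the singleton family $\mathcal{F}:=\{T\}$ is interlaced orbit-nonexpansive; indeed, for the one-element family the interlacing inequality $d(Tx,Ty)\le \sup\{D(x,o_R(y)):R\in\mathcal{F}\}$ collapses to the defining inequality (\ref{orbit}) of Definition \ref{def1}. Hence the family $\mathcal{F}=\{T\}$ satisfies all the hypotheses of Theorem \ref{PQRUNSFPP}.

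Finally, applying Theorem \ref{PQRUNSFPP} to $\mathcal{F}=\{T\}$ on the bounded metric space $(M,d)$, now equipped with $(p,q)$-URNS for $p=q=c$ and with $\mathcal{A}(M)$ compact, yields a common fixed point for all mappings in $\mathcal{F}$; since $\mathcal{F}$ has a single element, this common fixed point is simply a fixed point of $T$, which completes the argument. There is no genuine obstacle here: the entire content of the corollary is carried by Theorem \ref{PQRUNSFPP}, and the only points to verify are that the two reductions — URNS to $(p,q)$-URNS, and a single mapping to a one-element interlaced family — are legitimate, both of which are recorded verbatim in the remarks preceding the theorem.
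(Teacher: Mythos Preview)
Your proposal is correct and matches the paper's intended approach: the corollary is stated without proof in the paper, precisely because it follows immediately from Theorem~\ref{PQRUNSFPP} via the two reductions you describe (URNS as $(p,q)$-URNS with $p=q=c$, and a single orbit-nonexpansive map as a one-element interlaced family).
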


\begin{corollary}Let $\left(M,d\right)$ be a bounded metric space with  uniform relative normal structure and such that $\mathcal{A}\left(M\right)$ is compact.  Let $\mathcal{S}$ be a group of orbit-nonexpansive self-mappings  on $M$. Then there is some $x\in M$ such that $s(x)=x$ for all $s\in\mathcal{S}$.
\end{corollary}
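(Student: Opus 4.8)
The plan is to deduce this corollary directly from Theorem~\ref{PQRUNSFPP} by verifying that both of its hypotheses are in force. First I would observe that uniform relative normal structure is merely a special case of the $(p,q)$-variant: as noted in the remark following Definition~\ref{pqRUNS}, a metric space enjoying URNS with constant $c\in(0,1)$ has $(p,q)$-URNS with $p=q=c$. Since $c\in(0,1)$ guarantees in particular $0<q<1$, the geometric hypothesis required by Theorem~\ref{PQRUNSFPP} is met, and the compactness of $\mathcal{A}(M)$ is assumed outright.

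The only remaining point is to recast the group $\mathcal{S}$ as an admissible input to Theorem~\ref{PQRUNSFPP}, which asks for a family of \emph{interlaced} orbit-nonexpansive mappings rather than a group. This is precisely the content of Proposition~\ref{prop:groups}: since each $s\in\mathcal{S}$ is orbit-nonexpansive and $\mathcal{S}$ is a group acting on $M$, the family $\mathcal{S}$ is interlaced orbit-nonexpansive. I would simply invoke that proposition to convert the group structure into the interlacing condition, the key manipulation there being the rewriting $d(tx,sy)=d(tx,t\,t^{-1}sy)$ together with the closure of $\mathcal{S}$ under composition and inverses.

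With both hypotheses thus verified, I would apply Theorem~\ref{PQRUNSFPP} to the family $\mathcal{F}:=\mathcal{S}$, obtaining a point $x\in M$ fixed by every member of the family; that is, $s(x)=x$ for all $s\in\mathcal{S}$, as required. There is no genuine obstacle here, since the corollary is a straightforward specialization of the main theorem of the section. The only matters demanding a little care are the bookkeeping of constants, namely checking that URNS really does supply the $(p,q)$-URNS hypothesis with a legitimate $q\in(0,1)$, and the correct appeal to Proposition~\ref{prop:groups} to furnish the interlacing property that Theorem~\ref{PQRUNSFPP} presupposes.
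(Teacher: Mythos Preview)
Your proposal is correct and is exactly the argument the paper intends: the corollary is obtained by combining Proposition~\ref{prop:groups} (a group of orbit-nonexpansive mappings is interlaced) with Theorem~\ref{PQRUNSFPP}, using the observation that URNS with constant $c$ is $(p,q)$-URNS with $p=q=c\in(0,1)$. There is nothing to add.
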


We would like to finish the article with the following remark. The compactness of the family of admissible sets has been  assumed along the paper to obtain a minimal invariant set. In many occasions this assumption is given by the intrinsic conditions of the metric space. For instance, if the metric space is endowed with a topology $\tau$ for which the closed balls are $\tau$-closed, the compactness of $\mathcal{A}(M)$ is straightforward whenever $M$ is $\tau$-compact. In particular,  this holds for 
 weak compact or weak$^*$-compact domains  in Banach spaces. Additionally, as it was mentioned before,  the uniform normal structure implies compactness of $\mathcal{A}(M)$ (see \cite{Kh0} or \cite[Theorem 5.4]{KK}) for complete metric spaces. Whether the compactness of the family of admissible sets can be deduced  from the  URNS in complete metric spaces is still an open problem.

\end{document}